\numberwithin{equation}{section}
\theoremstyle{plain}
\newtheorem{theorem}{Theorem}
\newtheorem{lemma}{Lemma}
\newtheorem{proof}{Proof}
\begin{document}

%\udk{512.542.1, 512.54.05, 512.542.7}

%\date{\\
%Исправленный вариант\\ }

\author{G.\,V.~Antiufeev}
\address{Independent scientist}
\email{grigoriy.rus@gmail.com}

\title{A Lower Bound for the Diameter of Cayley Graph of the Symmetric Group $S_n$ Generated by $(12), (12 \dots n), (1n \dots 2)$}

\markboth{}{}

\maketitle

\begin{fulltext}

\begin{abstract}
Let us denote elements of the symmetric group $S_n$ using square brackets for one-line notation. Cycles will be represented using parentheses, following standard notation.
Under this convention, the full reversal of the identity element $()$ is the element $s = [n\ n-1 \dots 1]$.

In this article, we obtain a lower bound on the decomposition complexity of elements $s(1n \dots 2)^{i}$ into the generators $(12), (12 \dots n), (1n \dots 2)$, where $i$ ranges over the set $\{1,2,\dots,n\}$.
As a consequence, we derive the lower bound $n(n-1)/2$ for the diameter of Cayley graph of the group $S_n$ generated by $(12), (12 \dots n), (1n \dots 2)$.
\end{abstract}

\begin{keywords}
group theory, graph diameter, Cayley graph, shift, symmetric group.
\end{keywords}

\section{Introduction}

Let $G$ be a group and let $J$ be a subset of $G$. We say that $J$ generates $G$, or that $J$ is a generating set of $G$, if every element of $G$ can be expressed as a product of elements of $J$ and their inverses. 
The structure of a group is encoded by its Cayley graph, whose vertices are the elements of the group. Multiplying each element by the generators produces adjacent vertices connected by edges.  
In the standard setting, all edges are assigned weight equal to one.
In this case, the diameter of the graph is equal to the maximum length of the shortest word representing an element of the group in terms of the given generators and their inverses.  
Thus, the diameter reflects the worst-case algorithmic complexity of expressing a group element as a product of generators and their inverses.

The symmetric group $S_n$ is of particular interest since, according to Cayley's theorem, any finite group is isomorphic to a subgroup of some symmetric group.  
The elements of $S_n$ are permutations, which connects the study of Cayley graphs of this group with sorting problems \cite{Zubov2016}.

One of the minimal generating sets for $S_n$ consists of the two elements $(12)$ and $(12 \dots n)$. This set is also important because the Cayley graph for these generators contains a Hamiltonian path \cite{SawadaWilliams19}, which permits the construction of a Gray code.
Different weights for the edges corresponding to the generators affect the graph’s diameter.
For instance, if the weight of all edges corresponding to the element $(1n \dots 2)$ is assigned to~$\infty$, diameter is asymptotic to $\frac{3n^2}{4}$.
This result was obtained in~\cite{Zubov}.  
If the weight of all edges corresponding to $(12 \dots n)$ and $(1n \dots 2)$ are assigned to zero, the diameter equals $\left\lfloor \frac{n}{2} \right\rfloor \left\lfloor \frac{n-1}{2} \right\rfloor$, as shown in \cite{Zubov2016}.
This result was later independently obtained in~\cite{Alon}.

In the present work, we assume that all edges of the graph have weight equal to one. 
An asymptotic estimate of order $\Theta(n^2)$ for the diameter of this graph was obtained in~\cite{Babai}. Later, an upper bound of $\frac{3}{2}n^2$ was established in~\cite{Kuppili20}. 
A further improvement of these bounds was obtained in~\cite{ChervovRL}: the lower bound is equal to $\frac{n(n-1)}{2}-\frac{n}{2}-1$ and the upper bound is equal to $\frac{n(n-1)}{2}+3n$.

We denote elements of $S_n$ using square brackets for one-line notation.  
Cycle notation is represented using parentheses.  
Thus, the full reversal of the identity element $()$ of $S_n$ yields the permutation $s = [n\ n-1 \dots 1]$.  
In \cite{ChervovRL}, it was hypothesized that the most difficult element, for which the lower bound of the diameter of the Cayley graph of the symmetric group $S_n$ with generators $(12), (12 \dots n), (1n \dots 2)$, equal to $\frac{n(n-1)}{2}$, is attained, is the permutation $s$ shifted by two, i.e., the element $s(1n \dots 2)^{2}$.

In this paper we obtain a lower bound on the decomposition complexity of elements $s(1n \dots 2)^{i}$ into the generators $(12), (12 \dots n), (1n \dots 2)$, where $i$ ranges over the set $\{1,2,\dots,n\}$.
As a consequence, we derive the lower bound $\frac{n(n-1)}{2}$ for the diameter of Cayley graph of the group $S_n$ generated by $(12), (12 \dots n), (1n \dots 2)$.

\section{Definitions}
\label{sec::def}

We introduce several definitions. Additional definitions can be found, for example, in~\cite{Konstantinova} and \cite{Zhuravlev_Flerov_Vyaliy}.

\textit{A permutation} of length $n$ of elements of the set $\mathbb{Z}_n$ (with $n$ used as a representative of $\overline{0}$) is given by:
$$
\pi=
\bigl(\begin{smallmatrix}
  1 & 2 & 3 & \dots & n-1 & n \\
  \sigma(1) & \sigma(2) & \sigma(3) & \dots & \sigma(n-1) & \sigma(n)
\end{smallmatrix}\bigr),
$$ 
where $\sigma(i)$ denotes the image of the element $i$, $\sigma(i),i \in \mathbb{Z}_n$.
We also use a more compact notation for such permutations, employing square brackets in order to avoid confusion with cycle notation: 
$$
\pi = 
[\ \pi_1\ \ \pi_2\ \ \pi_3\ \ \dots\ \ \pi_{n-1}\ \ \pi_n\ ]
=
[\ \sigma(1)\ \ \sigma(2)\ \ \sigma(3)\ \ \dots\ \ \sigma(n\!-\!1)\ \ \sigma(n)\ ].
$$
In this notation, unlike the classical two-line representation, the order of the entries is essential.
That is, the elements of a permutation may be regarded as forming a sequence.

Let $K$ be a set equipped with a strict linear order $\prec$ (we will omit the word ``strict'' in the following).
Elements $a,b,c \in K$ are said to be in the relation of a \textit{cyclic order $\prec_{cycle}$} if and only if one of the following conditions holds \cite[p.6]{Hunt1935}:
$$
a \prec b \prec c, \quad b \prec c \prec a, \quad c \prec a \prec b.
$$
In this case, we say that the linear order $\prec$ \textit{induces the cyclic order $\prec_{cycle}$}.
An element $z \in K$ \textit{determines a linear order $\prec_z$} on $K \setminus \{z\}$ as follows: $a \prec_z b$ if and only if the elements $a,b,z$ are in the cyclic order~$\prec_{cycle}$ \cite[p.7]{Hunt1935}.
%Below, after the introduction of the notion of orbits, an important remark on the preservation of cyclic order will be given.

\textit{The group of all permutations of length
$n$} is denoted by $S_n$, and \textit{the identity element} of the group $S_n$ is denoted by $()$. 

\textit{The dihedral group} $
D_n \;=\; \langle R,S \mid R^n = e,\; S^2 = e,\; SRS = R^{-1} \rangle.$
The identity element of $D_n$ is $e$.
The group $D_n$ is interpreted as the group of symmetries of a regular $n$-gon: the element $R$ corresponds to a rotation by an angle $\frac{2\pi}{n}$, while the element $S$ corresponds to a reflection with respect to a symmetry axis.
The group $D_n$ contains $2n$ elements: $e=R^n$, $n-1$ rotations $R^i$, and $n$ reflections $SR^j$, where $i = 1,2,\dots,n-1$ and $j = 1,2,\dots,n$.
Each element $SR^j$ represents a reflection about the axis obtained by rotating the original reflection axis (corresponding to $S$) by the angle $\frac{2\pi j}{n}$.

We construct a subgroup $H_n < S_n$ such that $D_n \cong H_n$, for $n \geq 3$. Let
$
r = [2\ 3\ \dots\ n\ 1] = (12 \dots n), \quad s = [n\ n-1 \dots\ 1] = (1n \dots 2).
$
The permutation $r$ is called \textit{a left shift}, the permutation $r^{-1}$ is called \textit{a right shift}, and the permutations $sr^{j}$, where $j = 1,2,\dots,n$, will be called \textit{reflections}.
Then $H_n \;=\; \langle r,s \mid r^n = (),\; s^2 = (),\; srs = r^{-1} \rangle$ and the isomorphism is given by the mapping $\varphi: D_n \longrightarrow H_n$, defined by:
$$\varphi(R^i)=r^i,\quad
\varphi(SR^i)=sr^i,\quad
\varphi^{-1}(r^i)=R^i,\quad
\varphi^{-1}(sr^i)=SR^i.
$$
Thus, we will work with elements of the subgroup $H_n$ as with elements of the dihedral group.

We say that a permutation $\alpha$ is \textit{conjugate to a permutation $\beta$ via $\gamma$} if
$
\alpha = \gamma \beta \gamma^{-1},
$
where $\alpha,\beta,\gamma \in S_n$.

Let the cycle $(12)$ be denoted by $\delta$, $\delta \in S_n$. That is, $\delta = [2\ 1\ 3\ 4\ \dots\ n\!-\!1\ n].$

The Cayley graph $\Gamma = (V,E)$ of the symmetric group $S_n$ with generators $\delta, r, r^{-1}$ is defined as the graph with vertex set $V = S_n$ and edge set
\[
E = \{\{g,h\} : g,h \in S_n,\; g^{-1}h \in \{\delta, r, r^{-1}\}\}.
\]

\textit{The distance} $\textup{dist}(\pi',\pi'')$ between two permutations $\pi'$ and $\pi''$, where $\pi',\pi'' \in S_n$, is defined as the length of the shortest path in $\Gamma$ connecting the vertices corresponding to $\pi'$ and $\pi''$. 
The pair $S_n$ and $\textup{dist}(\cdot):S_n \times S_n \rightarrow \mathbb{R}$ forms a metric space.

The greatest distance between any two permutations is \textit{the diameter} of $\Gamma$:
$$
\textup{diam}(\Gamma) = 
\max_{
\pi,\xi \in S_n
}
\textup{dist}(\pi,\xi).
$$

\textit{The orbit} of the action of the group $\langle r \rangle$ on a permutation $\pi \in S_n$: $$\textup{Orb}(\pi) = \{\pi g \mid g \in \langle r \rangle\}.$$

Let $\pi \in S_n$ and $\xi \in$~$S_n$.
\textit{The distance} between a permutation
$\pi$ and $\textup{Orb}(\xi)$ is defined as:
$$
\textup{dist}(\pi,\textup{Orb}(\xi)) = 
\min_{
\substack{\xi' \in \textup{Orb}(\xi)}
}
\textup{dist}(\pi,\xi').
$$

Let $\pi = [\pi_1 \dots \pi_{i-1}\pi_i\pi_{i+1}\dots\pi_{j-1}\pi_{j}\pi_{j+1}\dots\pi_n]
$.
Then, any permutation in $\textup{Orb}(\xi)$, where
$$\xi = [\pi_1 \dots \pi_{i-1}\pi_{j}\pi_{j-1} \dots \pi_{i+1}\pi_{i}\pi_{j+1}   \dots\pi_n],
$$
which reverses the order of elements within the subsequence $\pi_i,\pi_{i+1},\dots,\pi_j$, is called a \textit{$(\pi_i,\pi_j)$-reversal of length $j-i+1$ of the permutation $\pi$}.
Sometimes we simply use the terms \textit{reversal} or \textit{reversal of length $j-i+1$}.
The operation of moving from a permutation to its $(\pi_i,\pi_j)$-reversal is called \textit{reversing} the subsequence $\pi_i,\pi_{i+1},\dots,\pi_j$.
%, or, more concisely, the subsequence $(\pi_i,\pi_j)$.
Thus, reversing the subsequence $\pi_1,\pi_2,\dots,\pi_n$ of a permutation $\pi$ is equivalent to multiplying by $sr^x$, where $x \in \mathbb{Z}_n$.

\section{A Lower Bound for the Diameter of Cayley Graph of the Symmetric Group $S_n$ Generated by $(12), (12 \dots n), (1n \dots 2)$}

Before proceeding to the proof of the main results of this paper, we determine the distances from a permutation to its nearest reversals of length not exceeding $\left\lceil\frac{n}{2}\right\rceil$.
In analogy with classical sorting problems, a reversal can be viewed as sorting a decreasing subsequence of elements, which corresponds to the worst-case complexity of the sorting process. That is, every pair of elements in the subsequence must be sorted. By the definition given above, a reversal is any permutation from the orbit of the original permutation in which the subsequence is sorted. Thus, we are dealing with sorting on a cyclic structure. From an algorithmic point of view, the particular permutation obtained from the orbit depends on the sequence of steps performed.
In classical sorting problems, adjacent transpositions are used. In our setting, there is only one such transposition, namely $\delta = (12)$. Therefore, it is necessary to use shifts, which are also generators in this problem, in order to swap arbitrary adjacent elements of the permutation.
Thus, in Lemma~\ref{lemma}, we determine the distance to the nearest reversals belonging to the orbit of permutations in which a subsequence of length at most $\left\lceil\frac{n}{2}\right\rceil$ is reversed. There are four such nearest reversals, corresponding to formulas~\ref{lem_1}--\ref{lem_4}. In algorithmic terms, the decompositions used in the proofs of formulas~\ref{lem_1}--\ref{lem_4} can be described as follows.
The decomposition used for formula~\ref{lem_1} corresponds to sorting the subsequence from left to right, starting with the first two elements.
For formula~\ref{lem_2}, the decomposition corresponds to sorting from right to left, starting with the last two elements.
For formula~\ref{lem_3}, the decomposition corresponds to sorting from left to right, starting from the middle of the subsequence. The decomposition for formula~\ref{lem_4} corresponds to sorting from right to left, starting from the middle.
We now proceed to the statement and proof of Lemma~\ref{lemma}.

\begin{lemma}\label{lemma}
Let 
$$
\pi = [\pi_1 \dots \pi_{j-1}\pi_j\pi_{j+1}\dots\pi_n],
$$
$$\xi = [\pi_{j}\pi_{j-1} \dots \pi_{1}\pi_{j+1} \dots \pi_n],
$$
where $\pi, \xi \in S_n, 2 \leqslant j \leqslant \left\lceil\frac{n}{2}\right\rceil, n \geqslant 3.$
Then 
\begin{align}
\label{lem_1}
\textup{dist}(\pi,\xi r^{\left\lfloor\frac{j}{2}\right\rfloor-1}) &= j(j-1)-1, \tag{$\textup{I}$} \\
\label{lem_2}
\textup{dist}(\pi r^{j-2},\xi r^{\left\lceil\frac{j}{2}\right\rceil-1}) &= j(j-1)-1, \tag{$\textup{II}$} \\
\label{lem_3}
\textup{dist}(\pi r^{\left\lfloor\frac{j}{2}\right\rfloor-1},\xi) &= j(j-1)-1, \tag{$\textup{III}$} \\
\label{lem_4}
\textup{dist}(\pi r^{\left\lceil\frac{j}{2}\right\rceil-1},\xi r^{j-2}) &= j(j-1)-1. \tag{$\textup{IV}$}
\end{align}
\end{lemma}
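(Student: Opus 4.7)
By the left-invariance of the Cayley-graph metric, each of the four equalities is a statement on the word length of a single group element: case~I corresponds to $\rho_j r^{\lfloor j/2\rfloor-1}$ and case~III to $r^{-(\lfloor j/2\rfloor-1)}\rho_j$, where $\rho_j=\pi^{-1}\xi$ is the permutation that reverses the first $j$ entries. Since $\rho_j$ is an involution and the generating set $\{\delta,r,r^{-1}\}$ is symmetric, these two elements are mutual inverses and therefore have equal word length. The same pairing identifies case~II with case~IV, so it is enough to prove two of the four equalities.

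\textbf{Upper bound.} The conjugate $r^{k-1}\delta r^{-(k-1)}$ realizes the adjacent transposition of positions $(k,k+1)$, and the reversal $\rho_j$ admits a reduced decomposition as a product of $\binom{j}{2}$ such adjacent transpositions in which the index $k$ varies by exactly $\pm 1$ from one step to the next (a ``snake'' word). Substituting this decomposition into our generators, all intermediate shifts collapse to single letters $r^{\pm 1}$, giving a word with $\binom{j}{2}$ occurrences of $\delta$ separated by $\binom{j}{2}-1$ cyclic letters, hence of total length $j(j-1)-1$. Choosing the snake so that it starts at $k=1$ and ends at the position that matches the prescribed target shift (for instance $r^{\lfloor j/2\rfloor-1}$ in case~I) makes the leading and trailing shift letters vanish. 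An analogous snake, shifted in the indexing, supplies the word for case~II.

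\textbf{Lower bound --- the main obstacle.} A direct parity or inversion argument yields only the weaker bound $\binom{j}{2}$. To reach $j(j-1)-1$, I would construct an integer invariant $\Phi\colon S_n\to\mathbb Z$ satisfying $|\Phi(\pi g)-\Phi(\pi)|\le 1$ for every $g\in\{\delta,r,r^{-1}\}$ and having source--target gap exactly $j(j-1)-1$. The natural source of such a $\Phi$ is the cyclic order $<_{\mathrm{cycle}}$ from Section~2: it is preserved by $r^{\pm 1}$ on every triple of values, while $\delta$ changes the cyclic order only of triples involving the current $\{\pi_1,\pi_2\}$, so pairwise cyclic-order counts are automatically unit-Lipschitz under the generators. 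The crux is to design $\Phi$ so that it saturates the constant $j(j-1)-1$ on the specific source/target pairs and captures the parity-sensitive difference between the shifts $\lfloor j/2\rfloor-1$ and $\lceil j/2\rceil-1$ separating the two remaining cases. Attaining this tightness, rather than merely an $\Omega(j^2)$ bound, is where I expect the bulk of the technical work to lie.
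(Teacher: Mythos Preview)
Your symmetry reduction of the four cases to two via inversion and left-invariance is correct and tidier than the paper's parallel treatment. For the upper bound, however, you assert without proof the existence of a reduced ``snake'' word for the longest element of $S_j$ whose first simple reflection has index $1$ and whose last has index exactly $\lfloor j/2\rfloor$, so that both boundary shift-blocks vanish; this is plausible but needs an argument. The paper avoids the issue with a different, fully explicit construction: it writes the reversal as the product of the $\lfloor j/2\rfloor$ long transpositions $(\pi_k\ \pi_{j+1-k})$, realises each one by the word $(\delta r)^{j-2k}\delta(r^{-1}\delta)^{j-2k}$ of length $4(j-2k)+1$, links consecutive blocks by a single shift, and checks that the total is $j(j-1)-1$.

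The substantive gap is in your lower bound. You write that ``a direct parity or inversion argument yields only the weaker bound $\binom{j}{2}$'' and then set out to engineer a bespoke Lipschitz potential $\Phi$ with gap $j(j-1)-1$. This overlooks the elementary step the paper actually uses. A cyclically anchored inversion count (invariant under $r^{\pm1}$, changed by at most one under $\delta$) does not bound the word length --- it bounds the number of \emph{occurrences of $\delta$} in any word. Since $\delta^2=()$, no geodesic contains two consecutive $\delta$'s, so between the $\binom{j}{2}$ forced copies of $\delta$ there must sit at least $\binom{j}{2}-1$ cyclic letters. The two counts together already give $j(j-1)-1$, matching the construction. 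The parity-sensitive functional sketched in your third paragraph is therefore unnecessary, and the distinction between the shifts $\lfloor j/2\rfloor-1$ and $\lceil j/2\rceil-1$ plays no role in the lower bound at all --- it is absorbed entirely by the choice of word in the upper bound.
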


\begin{proof}
\normalfont
We prove \textbf{Formula \ref{lem_1}}.
A permutation $\xi$ is a reversal of length $j$ of the permutation $\pi$, in which the elements $\pi_k$ and $\pi_l$ are swapped, with $1 \leqslant k \leqslant \left\lfloor\frac{j}{2}\right\rfloor$, $l = j + 1 - k$, $k < l$.

The linear order $<$ defined on the set of indices $\mathbb{Z}_n$ naturally induces a linear order~$\prec$ on the elements of the permutation $\pi$, which, in turn, induces the cyclic order $\prec_{cycle}$. That is, one may say that all elements of the permutation $\pi$ are in the cyclic order~$\prec_{cycle}$.
An \textit{inversion} is defined as a pair of elements $\pi_p$ and $\pi_q$ such that $\pi_p \nprec_{\pi_{j'}} \pi_q$, where $p,q \in \{1,2,\dots,j\}$, $p < q$, $j' \in \{j+1,j+2,\dots,n\}$, and where $\prec_{\pi_{j'}}$ is the linear order determined by the element $\pi_{j'}$.
The number of inversions of the sequence $\pi_1 \dots \pi_{j-1} \pi_j$ is equal to $0$, whereas the number of inversions of the sequence $\pi_j \pi_{j-1} \dots \pi_1$ is equal to $\frac{j(j-1)(n-j)}{2}$.

Consider the permutation $\pi r^{k-1}$, whose first position is occupied by the element $\pi_k$. To optimally swap the elements $\pi_k$ and $\pi_l$ we use the following decomposition:
\begin{equation}
\label{reversal_1}
\pi r^{k-1}\left(\delta r\right)^{l-k-1}\delta\left(r^{-1}\delta\right)^{l-k-1} \tag{$\mathcal{A}$}.
\end{equation}

Optimality is ensured by the following considerations.
Consider a permutation $\kappa \in$~$ S_n$. If $\kappa_1,\kappa_2 \in \{\pi_1,\pi_2,\dots,\pi_j\}$, then multiplication of $\kappa$ by $\delta$ changes the number of inversions by $n-j$.
If $\kappa_1,\kappa_2 \in \{\pi_{j+1},\pi_{j+2},\dots,\pi_n\}$, then multiplication of $\kappa$ by $\delta$ does not change the number of inversions.
If
$
\kappa_1 \in \{\pi_1,\pi_2,\dots,\pi_j\}, 
\kappa_2 \in \{\pi_{j+1},\pi_{j+2},\dots,\pi_n\},
$
or
$
\kappa_1 \in$~$ \{\pi_{j+1},\pi_{j+2},\dots,\pi_n\}, 
\kappa_2 \in \{\pi_1,\pi_2,\dots,\pi_j\},
$
then multiplication of $\kappa$ by $\delta$ changes the number of inversions by $j-1$.
Thus, each multiplication by $\delta$ in decomposition~\ref{reversal_1} increases the number of inversions by $n-j$.
Since $\delta^2 = ()$, there must be at least one shift between consecutive applications of $\delta$, which is indeed present in the decomposition.
Clearly, a shift preserves the number of inversions.
The number of group elements in decomposition~\ref{reversal_1} is equal to $4(l-k-1)+1$.

To obtain $\xi r^{\left\lfloor\frac{j}{2}\right\rfloor-1}$ one needs to swap $\left\lfloor\frac{j}{2}\right\rfloor$ pairs of elements of the permutation. The minimal number of shifts required to move from one pair of elements to the next is $\left\lfloor\frac{j}{2}\right\rfloor\!-\!1$.
Again, we note that shifts preserve the number of inversions.
Therefore, the total number of permutations, required to swap $\left\lfloor\frac{j}{2}\right\rfloor$ pairs using decomposition \ref{reversal_1} is:
\[
\textup{dist}(\pi,\xi r^{\left\lfloor\frac{j}{2}\right\rfloor-1}) = \sum_{k=1}^{\left\lfloor\frac{j}{2}\right\rfloor} \bigl(4(l - k - 1) + 1\bigr) + \left\lfloor\frac{j}{2}\right\rfloor - 1,
\quad \text{where } l = j + 1 - k.
\]

To obtain the desired value, substitute $l = j + 1 - k$, simplify the expression under the summation, compute the sum of the first $\left\lfloor\frac{j}{2}\right\rfloor$ terms of the resulting progression, and consider separately the cases of even and odd $j$.

The total number of permutations $\delta$ when using decomposition~\ref{reversal_1} is
\[
\sum_{k=1}^{\left\lfloor\frac{j}{2}\right\rfloor} \bigl(2(l - k - 1) + 1\bigr) = \frac{j(j-1)}{2},
\quad \text{where } l = j + 1 - k.
\]

Each multiplication by $\delta$ increases the number of inversions by $n-j$. Therefore, the total number of inversions increases by $\frac{j(j-1)(n-j)}{2}$, which coincides with the number of inversions of the sequence $\pi_{j}\pi_{j-1} \dots \pi_{1}$.

Now we prove \textbf{Formula \ref{lem_2}}, that is, $\textup{dist}(\pi r^{j-2},\xi r^{\left\lceil\frac{j}{2}\right\rceil-1}) = j(j-1)-1$.
Consider the permutation $\pi r^{l-2}$, whose first position is occupied by the element $\pi_{l-1}$. We obtain the following decomposition to swap the elements $\pi_k$ and $\pi_l$:
\begin{equation}
\label{reversal_4}
\pi r^{l-2}\left(\delta r^{-1}\right)^{l-k-1}
\delta
\left(r\delta\right)^{l-k-1}. \tag{$\mathcal{B}$}
\end{equation}

Decomposition~\ref{reversal_4} is optimal for the same reasons as decomposition~\ref{reversal_1} and consists of $4(l-k-1)+1$ permutations.
Therefore, the same total number of permutations is obtained for swapping $\left\lfloor\frac{j}{2}\right\rfloor$ pairs of elements as in the previous case.
\textbf{Formulas \ref{lem_3}} and \textbf{\ref{lem_4}} are proved similarly.
For Formula \ref{lem_3}, i.e., $\textup{dist}(\pi r^{\left\lfloor\frac{j}{2}\right\rfloor-1},\xi) = j(j-1)-1$, one should use decomposition \ref{reversal_1} to swap the elements of the permutation.
For Formula \ref{lem_4}, i.e., $\textup{dist}(\pi r^{\left\lceil\frac{j}{2}\right\rceil-1},\xi r^{j-2}) = j(j-1)-1$, one should use decomposition \ref{reversal_4}, to swap the elements of the permutation.
This completes the proof of the lemma.

\end{proof}

\begin{theorem}\label{th_1}
Let $s$ $=$ $[n\ n\!-\!1\ \dots\ 1]$, $r = [2\ 3\ \dots\ n\ 1]$, $s,r \in S_n$, $n \geqslant 4$.
The following bound is valid:
$$
\textup{dist}(sr^{n-i},()) \! = \! \left\lceil\frac{n}{2}\right\rceil \left(\left\lceil\frac{n}{2}\right\rceil\!-\!1\right)-1 + \left\lfloor\frac{n}{2}\right\rfloor\left(\left\lfloor\frac{n}{2}\right\rfloor\!-\!1\right)-1 +
\begin{cases}
\left\lfloor\frac{n}{2}\right\rfloor + 1, & i=1,\\[6pt]
\left\lfloor\frac{n}{2}\right\rfloor - i + 4, & 2 \le \!i\! \le \left\lfloor\frac{n}{2}\right\rfloor\!+\!2,\\[6pt]
i - \left\lceil\frac{n}{2}\right\rceil, & \left\lceil\frac{n}{2}\right\rceil\!+\!2 \le \!i\! \le n.
\end{cases}
$$
\end{theorem}
\begin{proof}
\normalfont

Since $sr^{n-i}sr^{n-i} = ()$, it is necessary to find the decomposition of the permutation $sr^{n-i}$. For simplicity, we first find a decomposition for $s$.
 
The linear order $<$ on the index set $\mathbb{Z}_n$ naturally induces a linear order $\prec$ on the permutation $s$, which in turn gives rise to a cyclic order on $s$. We denote this order by $\prec_{\textup{cycle}}$.
In other words, all elements of the permutation $s$ are in cyclic order with respect to $\prec_{\textup{cycle}}$.
If the elements of a permutation are indexed in the reverse order, one analogously obtains a cyclic order $\succ_{\textup{cycle}}$ on that permutation.
By the definition of cyclic order, this relation is preserved on $\textup{Orb}(\varsigma)$.

We introduce a metric that allows us to compute distances between permutations within a single orbit.
Let $\varsigma \in S_n$ be a permutation. The metric $\textup{dist}(\cdot)$ induces the \textit{Lee metric}~\cite[p.~52]{Dist} on $\textup{Orb}(\varsigma)$:
$$
\textup{dist}(\varsigma r^{i'},\varsigma r^{j'}) = \min\bigl(\, |i'-j'|,\ |n-(i'-j')| \,\bigr),
$$
where $i',j'\in \mathbb{Z}_n$.

This follows from the following considerations. As in the case of the permutation $s$, one can introduce a cyclic order $\prec'_{\textup{cycle}}$ on the elements of $\varsigma$. The shift $r$ preserves this order by definition. On the other hand, the permutation $\delta$ changes the cyclic order of some triple of elements. To restore the original order of these elements, one needs to transpose two of them, which requires another application of $\delta$. Thus, using $\delta$ does not reduce the length of the decomposition.
Thus, the number of shifts required to obtain $\varsigma r^{j'}$ from $\varsigma r^{i'}$ is
$
\min\bigl(\, |i'-j'|,\ |n-(i'-j')| \,\bigr).
$

We now state a fact on transpositions and shifts in a permutation. 
It is well known that any permutation $\varsigma \in S_n$ can be expressed as a product of transpositions, that is,
\[
\varsigma = \sigma_1 \sigma_2 \dots \sigma_k,
\quad \text{where } \sigma_1, \sigma_2, \dots, \sigma_k \in S_n.
\]
We assume that $k$ is minimal.
Let $\varsigma \neq r^p$, $p\in \mathbb{Z}_n$. Consider how the number of transpositions and shifts changes under multiplication of $\varsigma$ by $r^q$, $q\in \mathbb{Z}_n$
(that is, after multiplication by $q$ shifts):
\[
\varsigma r^{q} = ()\varsigma r^{q} = r^{q}r^{-q}\varsigma r^{q} = r^{q}(r^{-q}\varsigma r^{q})
\]
\[
= r^{q}(r^{-q}\sigma_1 \sigma_2 \dots \sigma_k r^{q})
= r^{q}(r^{-q}\sigma_1 () \sigma_2 () \dots () \sigma_k r^{q})
\]
\[
= r^{q}(r^{-q}\sigma_1 r^q r^{-q} \sigma_2 r^q r^{-q} \dots r^q r^{-q} \sigma_k r^{q})
\]
\[
= r^{q}\bigl((r^{-q}\sigma_1 r^{q})(r^{-q} \sigma_2 r^{q}) \dots (r^{-q} \sigma_k r^{q})\bigr).
\]

Thus, multiplication by $r^q$ on either the left or the right preserves the number of transpositions in the permutation $\varsigma$, conjugating them.
The number of shifts $q$ is also preserved.

We now proceed to show that the elements $s_1, s_2, \dots, s_n$ of the permutation $s$ are ordered with respect to the cyclic order relation $\succ_{\textup{cycle}}$ on the permutation $()$.
The element $s_p$, where $p \in \mathbb{Z}_n$, coincides with the element $()_{n-p+1}$ of the permutation $()$.
Let us choose any three elements $s_k, s_l, s_m$, where $k,l,m \in \mathbb{Z}_n$, of the permutation $s$. Without loss of generality, assume that $k < l < m$.
In the permutation $()$, these elements correspond respectively to $()_{n-k+1}$, $()_{n-l+1}$, and $()_{n-m+1}$.
Since $n-k+1 > n-l+1 > n-m+1$, the elements $s_k, s_l, s_m$ are ordered with respect to the cyclic order relation $\succ_{\textup{cycle}}$ on the permutation $()$.
It follows that the elements $s_1, s_2, \dots, s_n$ are ordered with respect to the cyclic order relation $\succ_{\textup{cycle}}$ on the permutation $()$.
It is clear that the elements $s_1, s_2, \dots, s_n$ are ordered with respect to the cyclic order relation $\succ_{\textup{cycle}}$ only on permutations belonging to $\textup{Orb}(())$.
This follows directly from the definition of the cyclic order and from the fact that the action of the group $\langle r \rangle$ sends indices of any triple of elements to cyclically equivalent ones.
Therefore, in order for the elements $s_1, s_2, \dots, s_n$ to be ordered with respect to the cyclic order relation $\succ_{\textup{cycle}}$, one must multiply $s$ by a number of permutations equal to $\textup{dist}(s, \textup{Orb}(()))$.
This is equivalent to reflecting the permutation $s$ with respect to some axis of symmetry.

The reflection of $s$, viewed as an element of the dihedral group, with respect to an axis of symmetry of a regular $n$-gon, is a product of transpositions corresponding to pairwise reflections of the vertices of the regular $n$-gon.
These transpositions naturally split into two sets $\textup{F}_1$ and $\textup{F}_2$ such that $|\textup{F}_1| = \left\lfloor\frac{n}{2}\right\rfloor$ and $|\textup{F}_2| = \left\lceil\frac{n}{2}\right\rceil$. 
These sets can be chosen so that they correspond to subsequences of the original permutation.

Reversing these two subsequences realizes the reflection by definition. For an optimal reversal, one uses Lemma~\ref{lemma}, which gives a distance between a permutation and its reversal of length at most $\left\lceil\frac{n}{2}\right\rceil$.
Since $\textup{F}_1 \cap \textup{F}_2 = \varnothing$, the reversals of these subsequences are independent. Therefore, the minimal number of permutations required for the simultaneous reversal of $\textup{F}_1$ and $\textup{F}_2$ is equal to the sum of the minimal numbers of permutations required for each subsequence.

In the proof of Lemma~\ref{lemma}, a decomposition in terms of the generators $\delta$, $r$, and $r^{-1}$ is used. Thus, the transpositions corresponding to pairwise reflections of the vertices of the regular $n$-gon decompose into the transpositions $\delta = (12)$ and shifts.

Since a reflection with respect to an axis of symmetry yields an element of $\textup{Orb}(())$, additional permutations may be required in the decomposition of the element $s$ in order to obtain the permutation $()$ in the end.
Adding further applications of $\delta$ would destroy the optimality of the decompositions used in the proof of Lemma~\ref{lemma}. Hence, only shifts may appear as additional permutations, and their number must be determined.

We now use the fact on the number of transpositions and shifts in an arbitrary permutation stated above. It follows that the decomposition consisting of an optimal reflection of the permutation $s$ with respect to an axis of symmetry, followed by a minimal number of additional shifts required to obtain $()$, is optimal.

Thus, let the first subsequence $\textup{F}_1$ have length $\left\lfloor\frac{n}{2}\right\rfloor$:
\[
n+m,\, n+m-1,\, \dots,\, n,\, \dots,\, n+m-\left\lfloor\frac{n}{2}\right\rfloor+1,
\]
where $0 \leqslant m \leqslant n-1$.

The second subsequence $\textup{F}_2$ has length $\left\lceil\frac{n}{2}\right\rceil$:
\[
n+m-\left\lfloor\frac{n}{2}\right\rfloor,\, n+m-\left\lfloor\frac{n}{2}\right\rfloor-1,\, \dots,\, 1,\, n,\, n-1,\, \dots,\, n+m+1.
\]

Such a decomposition is equivalent to the following procedure. The indices of all elements of the sequence $s = [s_1\ s_2\ \dots\ s_n]$ are decreased by $m$, i.e., we apply a right shift $sr^{-m}$. The first $\left\lfloor\frac{n}{2}\right\rfloor$ elements of the permutation $sr^{-m}$ form $\textup{F}_1$, while the remaining elements form $\textup{F}_2$.

For the first application of Lemma \ref{lemma}, we assume $j=\left\lfloor\frac{n}{2}\right\rfloor$ and first consider those values of $m$ for which Lemma \ref{lemma} determines the distances between a permutation $s$ and some $(n+m,n+m-\left\lfloor\frac{n}{2}\right\rfloor+1)$-reversal of the permutation $s$.
These values will be: $0,\left\lfloor\frac{n}{2}\right\rfloor-2,\left\lfloor\frac{\left\lfloor\frac{n}{2}\right\rfloor}{2}\right\rfloor-1,\left\lceil\frac{\left\lfloor\frac{n}{2}\right\rfloor}{2}\right\rceil-1$.
Each of these values corresponds to one of the formulas \ref{lem_1}-\ref{lem_4}.
When choosing one of these values of $m$ and the corresponding formula \ref{lem_1}-\ref{lem_4}, the permutation $s$, which is the original one, will be substituted into the distance function as the first argument.
That is, the distance between $s$ and an $(n+m,n+m-\left\lfloor\frac{n}{2}\right\rfloor+1)$-reversal of $s$ will be computed.
If, however, in one of the formulas~\ref{lem_1}-\ref{lem_4} for the reversal of $\textup{F}_1$
we were to substitute not $s$ but a permutation $sr^y$, $y\in\mathbb{Z}_n$, $y\neq n$,
then additional shifts of $s$ would be required.
This would violate the convention that all auxiliary shifts not involved in the decompositions of Lemma~\ref{lemma} are applied only after reflecting the permutation~$s$.

In order to proceed from the reversal of the subsequence $\textup{F}_1$ to the reversal of the subsequence $\textup{F}_2$, it is necessary that at least two elements of $\textup{F}_2$ occupy the first positions of the permutation obtained after the first reversal.
This necessitates at least two shifts, taking into account the Lee metric.
There are only two possible cases in which two shifts will be enough. Let us consider them separately.

Case 1. Let $m = \left\lceil\frac{\left\lfloor\frac{n}{2}\right\rfloor}{2}\right\rceil - 1$. Then by the Lemma \ref{lemma} for $j=\left\lfloor\frac{n}{2}\right\rfloor$ from the Formula \ref{lem_4} we obtain:
$$
\textup{dist}(sr^{-\left(\left\lceil\frac{\left\lfloor\frac{n}{2}\right\rfloor}{2}\right\rceil - 1\right)}r^{\left\lceil\frac{\left\lfloor\frac{n}{2}\right\rfloor}{2}\right\rceil - 1},\xi' r^{\left\lfloor\frac{n}{2}\right\rfloor-2}) =
\textup{dist}(s,\xi' r^{\left\lfloor\frac{n}{2}\right\rfloor-2}) = \left\lfloor\frac{n}{2}\right\rfloor\left(\left\lfloor\frac{n}{2}\right\rfloor-1\right)-1,
$$
where $\xi'$ is the permutation obtained from $sr^{-\left(\left\lceil\frac{\left\lfloor\frac{n}{2}\right\rfloor}{2}\right\rceil - 1\right)}$ by reversing $\textup{F}_1$.

In order to use the Lemma \ref{lemma} to reverse the subsequence $\textup{F}_2$ it is necessary and sufficient to apply two shifts to the element $\xi' r^{\left\lfloor\frac{n}{2}\right\rfloor-2}$.
Thus we get the element $\xi' r^{\left\lfloor\frac{n}{2}\right\rfloor}$.

Let us apply the Lemma \ref{lemma} again for $j=\left\lceil\frac{n}{2}\right\rceil$.
From the Formula \ref{lem_1} we have:
$$
\textup{dist}(\xi' r^{\left\lfloor\frac{n}{2}\right\rfloor},\xi'' r^{\left\lfloor\frac{\left\lceil\frac{n}{2}\right\rceil}{2}\right\rfloor-1}) = \left\lceil\frac{n}{2}\right\rceil\left(\left\lceil\frac{n}{2}\right\rceil-1\right)-1,
$$
where $\xi''$ is a permutation obtained from $\xi' r^{\left\lfloor\frac{n}{2}\right\rfloor}$ by reversing $\textup{F}_2$, that is, $\xi'' = ()$.

Thus, for $m = \left\lceil\frac{\left\lfloor\frac{n}{2}\right\rfloor}{2}\right\rceil - 1$, reversing $\textup{F}_1$, shifting by two, and reversing $\textup{F}_2$ is equivalent to reflecting the permutation $s$:
\[
ss r^{\left\lceil\frac{\left\lfloor\frac{n}{2}\right\rfloor}{2}\right\rceil - 1 + \left\lfloor\frac{\left\lceil\frac{n}{2}\right\rceil}{2}\right\rfloor - 1}.
\]

Now we apply the same reflection to the permutation $s r^{n-i}$ and simplify the resulting expression:
$$
sr^{n-i}sr^{\left\lceil\frac{\left\lfloor\frac{n}{2}\right\rfloor}{2}\right\rceil - 1 + \left\lfloor\frac{\left\lceil\frac{n}{2}\right\rceil}{2}\right\rfloor - 1}
= r^{\left\lceil\frac{\left\lfloor\frac{n}{2}\right\rfloor}{2}\right\rceil - 1 + \left\lfloor\frac{\left\lceil\frac{n}{2}\right\rceil}{2}\right\rfloor - 1 - n + i}.
$$

Next, we use the Lee metric to compute the distance to the identity element $()$:
$$
\textup{dist}(r^{\left\lceil\frac{\left\lfloor\frac{n}{2}\right\rfloor}{2}\right\rceil - 1 + \left\lfloor\frac{\left\lceil\frac{n}{2}\right\rceil}{2}\right\rfloor - 1 - n + i},()) =
\textup{dist}(r^{\left\lceil\frac{\left\lfloor\frac{n}{2}\right\rfloor}{2}\right\rceil - 1 + \left\lfloor\frac{\left\lceil\frac{n}{2}\right\rceil}{2}\right\rfloor - 1 - n + i},r^0) =
$$
$$
\min \left(\ \left|\left\lceil\frac{\left\lfloor\frac{n}{2}\right\rfloor}{2}\right\rceil \!-\! 1 \!+\! \left\lfloor\frac{\left\lceil\frac{n}{2}\right\rceil}{2}\right\rfloor \!-\! 1 \!-\! n \!+\! i\right|,\ \left|n\!-\!\left(\left\lceil\frac{\left\lfloor\frac{n}{2}\right\rfloor}{2}\right\rceil \!-\! 1 \!+\! \left\lfloor\frac{\left\lceil\frac{n}{2}\right\rceil}{2}\right\rfloor \!-\! 1 \!-\! n \!+\! i\right)\right|\ \right).
$$

Case 2. Let $m = \left\lfloor\frac{\left\lfloor\frac{n}{2}\right\rfloor}{2}\right\rfloor - 1$. 
By Lemma \ref{lemma} for $j=\left\lfloor\frac{n}{2}\right\rfloor$ from the Formula \ref{lem_3} we arrive at:
$$
\textup{dist}(sr^{-\left(\left\lfloor\frac{\left\lfloor\frac{n}{2}\right\rfloor}{2}\right\rfloor - 1\right)}r^{\left\lfloor\frac{\left\lfloor\frac{n}{2}\right\rfloor}{2}\right\rfloor - 1},\xi') =
\textup{dist}(s,\xi') = \left\lfloor\frac{n}{2}\right\rfloor\left(\left\lfloor\frac{n}{2}\right\rfloor-1\right)-1,
$$
where $\xi'$ is the permutation obtained from $sr^{-\left(\left\lfloor\frac{\left\lfloor\frac{n}{2}\right\rfloor}{2}\right\rfloor - 1\right)}$ by reversing $\textup{F}_1$.

In order to use the Lemma \ref{lemma} to reverse the subsequence $\textup{F}_2$ it is necessary and sufficient to apply two shifts to the element $\xi'$.
Thus, we end up with the element $\xi'{r}^{-2}$.

Let us apply the Lemma \ref{lemma} again for $j=\left\lceil\frac{n}{2}\right\rceil$.
From the Formula \ref{lem_2} we come to:
$$
\textup{dist}(\xi' r^{-2},
\xi'' r^{\left\lceil\frac{\left\lceil\frac{n}{2}\right\rceil}{2}\right\rceil-1}) = \left\lceil\frac{n}{2}\right\rceil\left(\left\lceil\frac{n}{2}\right\rceil-1\right)-1,
$$
where $\xi''$ is the permutation obtained from $\xi' r^{-2}$ by reversing $\textup{F}_2$, that is, $\xi''$~$=$~$()$.

Thus, for $m = \left\lfloor\frac{\left\lfloor\frac{n}{2}\right\rfloor}{2}\right\rfloor - 1$, reversing $\textup{F}_1$, shifting by two, and reversing $\textup{F}_2$ is equivalent to reflecting the permutation $s$:
$$
ssr^{\left\lfloor\frac{\left\lfloor\frac{n}{2}\right\rfloor}{2}\right\rfloor - 1 + \left\lceil\frac{\left\lceil\frac{n}{2}\right\rceil}{2}\right\rceil - 1}.
$$

Now we apply the same reflection to the permutation $s r^{n-i}$ and simplify the resulting expression:
$$
sr^{n-i}sr^{\left\lfloor\frac{\left\lfloor\frac{n}{2}\right\rfloor}{2}\right\rfloor - 1 + \left\lceil\frac{\left\lceil\frac{n}{2}\right\rceil}{2}\right\rceil - 1}
= r^{\left\lfloor\frac{\left\lfloor\frac{n}{2}\right\rfloor}{2}\right\rfloor - 1 + \left\lceil\frac{\left\lceil\frac{n}{2}\right\rceil}{2}\right\rceil - 1 - n + i}.
$$

Next, we use the Lee metric to compute the distance to $()$:
$$
\textup{dist}(r^{\left\lfloor\frac{\left\lfloor\frac{n}{2}\right\rfloor}{2}\right\rfloor - 1 + \left\lceil\frac{\left\lceil\frac{n}{2}\right\rceil}{2}\right\rceil - 1 - n + i},()) =
\textup{dist}(r^{\left\lfloor\frac{\left\lfloor\frac{n}{2}\right\rfloor}{2}\right\rfloor - 1 + \left\lceil\frac{\left\lceil\frac{n}{2}\right\rceil}{2}\right\rceil - 1 - n + i},r^0) =
$$
$$
\min \left(\ \left|\left\lfloor\frac{\left\lfloor\frac{n}{2}\right\rfloor}{2}\right\rfloor \!-\! 1 \!+\! \left\lceil\frac{\left\lceil\frac{n}{2}\right\rceil}{2}\right\rceil \!-\! 1 \!-\! n \!+\! i\right|,\ \left|n\!-\!\left(\left\lfloor\frac{\left\lfloor\frac{n}{2}\right\rfloor}{2}\right\rfloor \!-\! 1 \!+\! \left\lceil\frac{\left\lceil\frac{n}{2}\right\rceil}{2}\right\rceil \!-\! 1 \!-\! n \!+\! i\right)\right|\ \right).
$$

Of the two cases considered, it is necessary to choose a decomposition with a minimum number of elements. Thus, we work out the total number of decomposition elements:
\[
\begin{aligned}
&\quad\quad\quad \left\lceil\frac{n}{2}\right\rceil \left(\left\lceil\frac{n}{2}\right\rceil-1\right)-1
 + \left\lfloor\frac{n}{2}\right\rfloor\left(\left\lfloor\frac{n}{2}\right\rfloor-1\right)-1
 + 2 \\ 
&+ \min\left(
\begin{aligned}
  &\left|\left\lceil\frac{\left\lfloor\frac{n}{2}\right\rfloor}{2}\right\rceil - 1 + \left\lfloor\frac{\left\lceil\frac{n}{2}\right\rceil}{2}\right\rfloor - 1 - n + i\right|,\\
  &\left|n-\left(\left\lceil\frac{\left\lfloor\frac{n}{2}\right\rfloor}{2}\right\rceil - 1 + \left\lfloor\frac{\left\lceil\frac{n}{2}\right\rceil}{2}\right\rfloor - 1 - n + i\right)\right|,\\
  &\left|\left\lfloor\frac{\left\lfloor\frac{n}{2}\right\rfloor}{2}\right\rfloor - 1 + \left\lceil\frac{\left\lceil\frac{n}{2}\right\rceil}{2}\right\rceil - 1 - n + i\right|,\\
  &\left|n-\left(\left\lceil\frac{\left\lceil\frac{n}{2}\right\rceil}{2}\right\rceil - 1 + \left\lfloor\frac{\left\lfloor\frac{n}{2}\right\rfloor}{2}\right\rfloor - 1 - n + i\right)\right|
\end{aligned}
\right).
\end{aligned}
\]

We now simplify the resulting formula. Table~\ref{tab:mod4_params} lists the values of the floor and ceiling functions depending on the residue class of $n$ modulo~$4$.

\begin{table}[h]
\centering
\renewcommand{\arraystretch}{1.6}
\setlength{\tabcolsep}{10pt}
\addtolength{\extrarowheight}{5pt} % чтобы дроби не прилипали к линиям

\begin{tabular}{l|ccccc}
 & $\left\lfloor\frac{\left\lfloor\frac{n}{2}\right\rfloor}{2}\right\rfloor$ & $\left\lceil\frac{\left\lceil\frac{n}{2}\right\rceil}{2}\right\rceil$& $\left\lfloor\frac{\left\lceil\frac{n}{2}\right\rceil}{2}\right\rfloor$ & $\left\lceil\frac{\left\lfloor\frac{n}{2}\right\rfloor}{2}\right\rceil$ & $\left\lfloor
\frac{n}{2}
\vphantom{\frac{\left\lceil\frac{n}{2}\right\rceil}{2}}
\right\rfloor$  \\
\midrule
$n=4k$   & $k$ & $k$  & $k$   & $k$    & $2k$ \\
$n=4k+1$ & $k$ & $k+1$& $k$   & $k$    & $2k$ \\
$n=4k+2$ & $k$ & $k+1$& $k+1$ & $k$    & $2k+1$ \\
$n=4k+3$ & $k$ & $k+1$& $k+1$ & $k+1$  & $2k+1$ \\
\end{tabular}
\selectlanguage{english}
\caption{Values of floor and ceiling functions}
\label{tab:mod4_params}
\end{table}

From the table, it follows that $\min\left( \left\lfloor\frac{\left\lfloor\frac{n}{2}\right\rfloor}{2}\right\rfloor+\left\lceil\frac{\left\lceil\frac{n}{2}\right\rceil}{2}\right\rceil,\left\lfloor\frac{\left\lceil\frac{n}{2}\right\rceil}{2}\right\rfloor + \left\lceil\frac{\left\lfloor\frac{n}{2}\right\rfloor}{2}\right\rceil \right)$ equals $2k$ for the first two cases and $2k+1$ for the remaining ones, and that this minimum is equal to $\left\lfloor\frac{n}{2}\right\rfloor$.
Thus, we obtain the expression:
\[
 \left\lceil\frac{n}{2}\right\rceil \left(\left\lceil\frac{n}{2}\right\rceil-1\right)
 + \left\lfloor\frac{n}{2}\right\rfloor\left(\left\lfloor\frac{n}{2}\right\rfloor-1\right)
  + \min\left(
  \left|\left\lfloor\frac{n}{2}\right\rfloor - 2 - n + i\right|,
  \left|n-\left(\left\lfloor\frac{n}{2}\right\rfloor - 2 - n + i\right)\right|
\right).
\]

Substituting different ranges of $i$, we obtain the desired estimate.
The theorem is proved.

\end{proof}

\begin{theorem}
A lower bound for the diameter of Cayley graph of the symmetric group $S_n$ generated by  $(12), (12 \dots n), (1n \dots 2)$ is $\frac{n(n-1)}{2}$ for $n \geqslant 4$.
\end{theorem}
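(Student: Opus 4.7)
The plan is to extract the bound directly from Theorem \ref{th_1} by maximising its formula for $\textup{dist}(sr^{n-i},())$ over $i \in \{1,\dots,n\}$. Since the diameter of $\Gamma$ dominates any pairwise distance, we have $\textup{diam}(\Gamma) \geq \max_{i} \textup{dist}(sr^{n-i},())$, so it suffices to exhibit a single index $i$ that pushes the right-hand side up to $n(n-1)/2$.

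First I would inspect the piecewise correction term in Theorem \ref{th_1}. It equals $\fnt+1$ at $i=1$; it decreases linearly from $\fnt+2$ down to $2$ on the middle range $2 \leq i \leq \fnt+2$; and it increases linearly from $2$ up to $\fnt$ on the upper range $\cnt+2 \leq i \leq n$. Comparing these three cases, the global maximum is $\fnt+2$, attained at $i=2$. This matches the conjecture of \cite{ChervovRL} that the extremal element is $s(1n\dots 2)^2 = sr^{n-2}$.

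Substituting $i=2$ into Theorem \ref{th_1}, the two explicit $-1$ terms combine with the correction $\fnt+2$ to give
$$
\textup{dist}(sr^{n-2},()) = \cnt(\cnt-1) + \fnt(\fnt-1) + \fnt.
$$
A short parity case split then completes the argument. For even $n$ we have $\cnt=\fnt=n/2$, so the right-hand side becomes $2\cdot\tfrac{n}{2}\bigl(\tfrac{n}{2}-1\bigr) + \tfrac{n}{2} = n(n-1)/2$. For odd $n$, substituting $\cnt = (n+1)/2$ and $\fnt = (n-1)/2$ and factoring out $(n-1)/2$ from the three summands again yields $n(n-1)/2$.

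There is no genuine obstacle here: Theorem \ref{th_1} has already done all the work. The only care required is verifying that $i=2$ is the argmax of the piecewise expression (in particular that it beats the boundary values at $i=1$ and $i=n$) and carrying out the two-case arithmetic simplification at the end. Since Theorem \ref{th_1} is stated under the hypothesis $n \geq 4$, the bound is established for all such $n$; the remaining small cases, if desired, can be checked by direct inspection of the Cayley graph.
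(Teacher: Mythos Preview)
Your proposal is correct and follows exactly the paper's approach: the paper's proof consists of the single sentence that the bound follows from Theorem~\ref{th_1} at $i=2$. You add the explicit verification that $i=2$ maximises the piecewise correction term and carry out the parity case split showing the value equals $n(n-1)/2$, details the paper leaves to the reader.
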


\begin{proof}
\normalfont
The lower bound on the graph diameter follows from Theorem~\ref{th_1} for $i=2$:
$$
\textup{dist}(\ sr^{n-2},()\ ) = \left\lceil\frac{n}{2}\right\rceil \left(\left\lceil\frac{n}{2}\right\rceil-1\right)-1 + \left\lfloor\frac{n}{2}\right\rfloor\left(\left\lfloor\frac{n}{2}\right\rfloor-1\right)-1 +
\left\lfloor\frac{n}{2}\right\rfloor - 2 + 4 =
$$
$$
\left\lceil\frac{n}{2}\right\rceil^2 + \left\lfloor\frac{n}{2}\right\rfloor^2 - \left\lceil\frac{n}{2}\right\rceil.
$$
For $n=2z$, we obtain $z^2+z^2-z = z(2z-1)=\frac{n(n-1)}{2}$.
For $n=2z+1$, we obtain $(z+1)^2+z^2-(z+1) = z(2z+1)=\frac{n(n-1)}{2}$.
This completes the proof of the theorem.
\end{proof}

\textbf{The author expresses gratitude to Alexander Chervov} (Institut Curie, Paris) for posing the problem and for the invitation to participate in the CayleyPy project, \textbf{as well as to Kirill Andreevich Popkov} (Keldysh Institute of Applied Mathematics, RAS, Moscow) for valuable remarks.

%=================References====================
\end{fulltext}

\end{document}